\newtheorem{theorem}{Theorem}[section]
\newtheorem*{theorem*}{Theorem B}
\newtheorem{lemma}[theorem]{Lemma}
\newtheorem{proposition}[theorem]{Proposition}
\newtheorem{corollary}[theorem]{Corollary}
\newtheorem*{definition*}{Definition}
\newtheorem*{remark*}{Remark}
\newtheorem*{observation*}{Observation}
\newtheorem*{assumption*}{Assumption}
\newtheorem*{question*}{Question}
\newtheorem*{problem*}{Problem}
\newtheorem{remark}[theorem]{Remark}
\newcommand{\N}{\mathbb{N}}
\newcommand{\E}{\mathbb{E}}
\newcommand{\PP}{\mathbb{P}}
\newcommand{\EE}{\mathcal{E}}
\newcommand{\NN}{\mathcal{N}}
\begin{document}

\title[Random integer sequence]{On a class of random sets of positive integers}

\author
{Yong Han}
\address
{Yong HAN: College of Mathematics and Statistics, Shenzhen University, Shenzhen 518060, Guangdong, China. }
\email{hanyongprobability@gmail.com}

\author
{Yanqi Qiu}
\address
{Yanqi QIU: Institute of Mathematics and Hua Loo-Keng Key Laboratory of Mathematics, AMSS, Chinese Academy of Sciences, Beijing 100190, China.}
\email{yanqi.qiu@amss.ac.cn}

\author{Zipeng Wang}
\address{Zipeng WANG: College of Mathematics and Statistics, Chongqing University, Chongqing,
401331, P.R.China}
\email{zipengwang2012@gmail.com, zipengwang@cqu.edu.cn}

\thanks{Y. Qiu is supported by grants NSFC Y7116335K1,  NSFC 11801547 and NSFC 11688101 of National Natural Science Foundation of China. Z. Wang is supported by NSFC 11601296}

\begin{abstract}
In this note, we study a class of random subsets of positive integers induced by Bernoulli random variables. We obtain sufficient conditions such that the random set is almost surely lacunary, does not have bounded gaps and contains infinitely many arithmetic progressions, respectively.
\end{abstract}

\subjclass[2010]{Primary 60G55, 60C05; Secondary 11B05, 11B25  }
\keywords{Random set; lacunary sequence; gap; arithmetic progressions}

\maketitle

\setcounter{equation}{0}

\section{Introduction}
Let $\mathbb{N} = \{1, 2, \cdots \}$ be the set of positive integers. Suppose that $X=(X_k)_{k\in\mathbb{N}}$ is a sequence of independent Bernoulli random variables with
\[
\PP[X_k=1]=1-\PP[X_k=0]=p_k\in [0,1], \quad \forall k \in \N.
\]
In this note, we studied the random set $\EE_X \subset \N$ defined by
\[
\EE_X: =\{k\in\N: X_k=1\}.
\]
By Borel-Cantelli lemma,   $\sum_{k=0}^{\infty}p_k=\infty$ if and only if 
$|\EE_X|=\infty$ almost surely. Here  by $|\EE_X|$, we mean the cardinality of  the set $\EE_X$. We shall always consider the case when $\EE_X$  is almost surely an infinite subset. 

Motivated by recent constructions of number rigid determinantal point processes
on the unit disc $\mathbb{D}$ with sub-Bergman kernels \cite{QW}, the first part of this notes is devoted to investigation of the  sparse properties of the random set $\EE_X$. We give sufficient conditions on $(p_k)_{k\in\N}$ such that $\EE_X$ is lacunary or does not have bounded gap.

Recall that an increasing sequence $S=\{n_k\}\subset \N$ is a {\it lacunary sequence} if
\[
\liminf_{k}\frac{n_{k+1}}{n_k}> 1.
\]

We call that a function  $f:[1,\infty)\to [1, \infty)$ is {\it admissible} if
\begin{itemize}
\item $f$ is non-decreasing.
\item  For any $\epsilon>0$, we have
\[
\int_1^{\infty}\frac{dx}{xf(x)}=\infty \text{ and } \int_1^{\infty}\frac{dx}{xf(x)^{1+\epsilon}}<\infty.
\]
\end{itemize}

\begin{theorem}\label{P:lacunary}
Let $f: [1, \infty) \rightarrow [1, \infty)$ be an admissible function.  Let $X=(X_k)_{k\in\mathbb{N}}$ be a sequence of independent Bernoulli random variables with
$$\PP[X_k=1]=1-\PP[X_k=0]=\frac{1}{kf(k)}.$$ Let
$\NN_n=| \EE_X\cap (2^n,2^{n+1}]|$,
then almost surely, we have
\begin{align}\label{sup-prod}
\limsup\limits_{n\to\infty}\NN_n\cdot \NN_{n+1}=0.
\end{align}
In particular, almost surely, $\EE_X$ is a lacunary sequence.
\end{theorem}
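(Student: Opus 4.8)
The plan is to establish \eqref{sup-prod} by a first–moment (Borel--Cantelli) argument that exploits the fact that $\NN_n$ and $\NN_{n+1}$ are built from disjoint, hence independent, families of the variables $(X_k)$. Since $\NN_n\NN_{n+1}$ takes values in $\{0,1,2,\dots\}$, the $\limsup$ in \eqref{sup-prod} vanishes almost surely if and only if $\NN_n\NN_{n+1}=0$ for all large $n$, that is, only finitely many indices $n$ satisfy $\NN_n\NN_{n+1}\ge 1$. By Markov's inequality $\PP[\NN_n\NN_{n+1}\ge 1]\le \E[\NN_n\NN_{n+1}]$, so it suffices to prove $\sum_n \E[\NN_n\NN_{n+1}]<\infty$ and then invoke the Borel--Cantelli lemma.

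First I would estimate the expectation. Since $f$ is non-decreasing, $f(k)\ge f(2^n)$ for $2^n<k\le 2^{n+1}$, and $\sum_{2^n<k\le 2^{n+1}}\tfrac1k\le 1$, so
\[
\E[\NN_n]=\sum_{2^n<k\le 2^{n+1}}\frac{1}{k f(k)}\le \frac{1}{f(2^n)}.
\]
By independence and monotonicity, $\E[\NN_n\NN_{n+1}]=\E[\NN_n]\,\E[\NN_{n+1}]\le f(2^n)^{-1}f(2^{n+1})^{-1}\le f(2^n)^{-2}$. The remaining point is to turn the admissibility hypothesis into the summability $\sum_n f(2^n)^{-2}<\infty$: taking $\epsilon=1$ in the definition and substituting $x=2^t$ converts $\int_1^\infty \frac{dx}{x f(x)^2}<\infty$ into $\int_0^\infty f(2^t)^{-2}\,dt<\infty$, and since $t\mapsto f(2^t)^{-2}$ is non-increasing the integral test gives $\sum_n f(2^n)^{-2}<\infty$. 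Combining these bounds yields $\sum_n\E[\NN_n\NN_{n+1}]<\infty$, which proves \eqref{sup-prod}.

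For the \emph{in particular} statement, the main point to be careful about is that \eqref{sup-prod} by itself is not quite enough: eventual vanishing of $\NN_n\NN_{n+1}$ says only that no two consecutive dyadic blocks $(2^n,2^{n+1}]$ are simultaneously occupied, yet it still permits two points inside a single block, whose ratio could tend to $1$ and destroy lacunarity. I would rule this out with a second Borel--Cantelli step: by the pairwise union bound for independent Bernoullis, $\PP[\NN_n\ge 2]\le \tfrac12(\E[\NN_n])^2\le \tfrac12 f(2^n)^{-2}$, which is again summable, so almost surely $\NN_n\le 1$ for all large $n$. On the almost sure event where both conclusions hold, every sufficiently large block carries at most one point of $\EE_X$ and no two consecutive blocks are occupied. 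Hence, if $n_k<n_{k+1}$ are consecutive elements of $\EE_X$ with $n_k\in(2^a,2^{a+1}]$, the block $(2^{a+1},2^{a+2}]$ is empty, forcing $n_{k+1}>2^{a+2}$ and
\[
\frac{n_{k+1}}{n_k}>\frac{2^{a+2}}{2^{a+1}}=2,
\]
so $\liminf_k n_{k+1}/n_k\ge 2>1$ and $\EE_X$ is lacunary. Alternatively, one can prove directly the single stronger statement that $\NN_n+\NN_{n+1}\le 1$ for all large $n$, which simultaneously delivers \eqref{sup-prod} and lacunarity in one Borel--Cantelli argument.
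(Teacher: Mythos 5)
Your proof is correct, and the first half runs along the same lines as the paper's: both arguments reduce \eqref{sup-prod} to the summability of $\PP[\NN_n\NN_{n+1}\ge 1]$ via Borel--Cantelli and the independence of $\NN_n$ and $\NN_{n+1}$; the paper bounds $\PP[\NN_n\ge 1]=1-\prod_{k\in I_n}(1-\tfrac{1}{kf(k)})$ by the integral $A_n=\int_{2^n}^{2^{n+1}}\frac{dx}{xf(x)}\le f(2^n)^{-1}$, while you use the first-moment bound $\PP[\NN_n\ge1]\le\E[\NN_n]\le f(2^n)^{-1}$ --- essentially the same estimate, finished in both cases by the $\epsilon=1$ clause of admissibility. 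Where you genuinely diverge is the lacunarity step. The paper gets ``at most one point per block eventually'' by invoking Lemma \ref{P:Cmany} with $\alpha=1$, a two-sided result ($\limsup\NN_n=1$ exactly) whose proof requires the lower-bound half with its $\Gamma$-function asymptotics and the divergence clause of admissibility; you instead prove only the upper bound that is actually needed, via the pairwise union bound $\PP[\NN_n\ge2]\le\tfrac12(\E[\NN_n])^2\le\tfrac12 f(2^n)^{-2}$, which is summable by the same computation already done for \eqref{sup-prod}. This makes your argument self-contained and noticeably lighter; what it gives up is the extra information from Lemma \ref{P:Cmany} that infinitely many blocks \emph{are} occupied (which the paper's corollary exploits elsewhere, but which is not needed for Theorem \ref{P:lacunary}). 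You are also right to flag explicitly that \eqref{sup-prod} alone does not yield lacunarity because of the possibility of two points in a single block --- the paper handles this implicitly through the lemma, and your final ratio estimate $n_{k+1}/n_k>2$ matches the paper's conclusion.
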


Given an increasing infinite sequence $S=\{n_k\}\subset \N$, define the {\it gap} of $S$ as
\[
\mathrm{Gap}(S):=\sup_{i\in\N}(n_{k+1}-n_k).
\]
If $\mathrm{Gap}(S)<\infty$, we say that $S$ has bounded gap.

\begin{proposition}\label{P:gap}
Suppose that $X=(X_k)_{k\in\mathbb{N}}$ is a sequence of independent Bernoulli random variables with
$$
\PP[X_k=1]=1-\PP[X_k=0]=p_k\in [0,1].
$$
Assume tha $\sum_{k}p_k=\infty$ and  $\sum_k p_k^2<\infty$. Then almost surely $\EE_X$ does not have bounded gap.
That is, if we write $\EE_X$ as an increasing sequence $\EE_X = \{n_k(X) \}$, then
\[
\lim_{k\rightarrow\infty}(n_{k+1}(X)-n_k(X))=\infty, \quad \mathrm{a.s.}
\]
\end{proposition}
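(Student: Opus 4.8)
The plan is to prove the sharper, cleaner statement directly: for each fixed threshold $L\in\N$, almost surely only finitely many of the consecutive gaps $n_{k+1}(X)-n_k(X)$ are $\le L$. Since
\[
\Big\{\lim_{k\to\infty}\big(n_{k+1}(X)-n_k(X)\big)=\infty\Big\}=\bigcap_{L\in\N}\big\{\text{only finitely many gaps are}\le L\big\}
\]
is a countable intersection, establishing the claim for each $L$ yields the proposition. First I would record that $\sum_k p_k=\infty$, together with independence and the second Borel--Cantelli lemma, guarantees $|\EE_X|=\infty$ almost surely, so that the increasing enumeration $\EE_X=\{n_k(X)\}$ and its gaps are well defined.

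The key observation is a translation of the geometric event ``there is a gap $\le L$'' into an event about nearby successes. For $m\in\N$ set
\[
B_m^{(L)}:=\{X_m=1\}\cap\bigcup_{j=1}^{L}\{X_{m+j}=1\}.
\]
The event $B_m^{(L)}$ occurs precisely when $X_m=1$ and at least one of $X_{m+1},\dots,X_{m+L}$ equals $1$; since $X_m=1$ forces $m=n_k(X)$ for some $k$, this says exactly that $m=n_k(X)$ and $n_{k+1}(X)\le m+L$. Thus the set of indices $m$ for which $B_m^{(L)}$ occurs is exactly $\{n_k(X):n_{k+1}(X)-n_k(X)\le L\}$, so ``infinitely many gaps $\le L$'' coincides with the event $\{B_m^{(L)}\text{ occurs infinitely often}\}$. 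It therefore suffices to show $\sum_m\PP[B_m^{(L)}]<\infty$ and invoke the first Borel--Cantelli lemma.

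To bound the probabilities I would use a union bound followed by independence,
\[
\PP[B_m^{(L)}]\le\sum_{j=1}^{L}\PP[X_m=1,\,X_{m+j}=1]=p_m\sum_{j=1}^{L}p_{m+j},
\]
and then sum over $m$, exchanging the finite and infinite sums to get $\sum_m\PP[B_m^{(L)}]\le\sum_{j=1}^{L}\sum_m p_m p_{m+j}$. The hypothesis $\sum_k p_k^2<\infty$ enters through the elementary inequality $p_m p_{m+j}\le\tfrac12(p_m^2+p_{m+j}^2)$, which gives $\sum_m p_m p_{m+j}\le\sum_k p_k^2<\infty$ uniformly in $j$, whence $\sum_m\PP[B_m^{(L)}]\le L\sum_k p_k^2<\infty$.

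With summability in hand, the first Borel--Cantelli lemma shows that almost surely only finitely many $B_m^{(L)}$ occur, i.e.\ almost surely all but finitely many gaps exceed $L$; intersecting over $L\in\N$ finishes the argument. There is no serious obstacle here. The only point requiring care is the combinatorial dictionary between gaps and the events $B_m^{(L)}$ (in particular, checking that the union bound over the $L$ possible landing sites is the right estimate), after which the role of $\sum_k p_k^2<\infty$ is exactly to force the double sum to converge. It is worth emphasizing that the assumption $\sum_k p_k=\infty$ is used only to ensure that $\EE_X$ is infinite, while the divergence of the gaps is driven entirely by $\sum_k p_k^2<\infty$.
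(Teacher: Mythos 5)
Your proof is correct and follows essentially the same route as the paper's: both reduce ``infinitely many gaps $\le L$'' to the occurrence, infinitely often, of two successes within distance $L$ of each other, and then apply the first Borel--Cantelli lemma using $\sum_m p_m p_{m+j}<\infty$ (you via $p_mp_{m+j}\le\tfrac12(p_m^2+p_{m+j}^2)$, the paper via Cauchy--Schwarz). The only cosmetic difference is that the paper splits by the exact gap value $l\in\{1,\dots,L\}$ and treats each event $\{X_kX_{k+l}=1\ \text{i.o.}\}$ separately, whereas you bundle the $L$ offsets into one event per index and union-bound.
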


Next, we include a result on the intersection of $\EE_X$ and sequences with bounded gap.
\begin{proposition}\label{P:boundedgap}
Let $(p_k)_{k\in\N}$ be a non-increasing sequence in $[0,1]$  and $\sum_{k}p_k=\infty$.  Let $X=(X_k)_{k\in\mathbb{N}}$ be a sequence of independent Bernoulli random variables with
$$
\PP[X_k=1]=1-\PP[X_k=0]=p_k,
$$
Then for any infinite subset $S\subset\N$ with bounded gap, we have
\[
\PP[S\cap\EE_X\neq\emptyset]=1.
\]
\end{proposition}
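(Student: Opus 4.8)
The plan is to reduce the statement to the divergence of a subsampled series and then invoke the second Borel--Cantelli lemma. Write $S = \{n_1 < n_2 < \cdots\}$ and set $M := \mathrm{Gap}(S) < \infty$. Since the $X_k$ are independent, the events $A_k := \{X_{n_k} = 1\}$, $k \in \N$, are independent with $\PP[A_k] = p_{n_k}$. As $\limsup_k A_k \subset \{S \cap \EE_X \neq \emptyset\}$, by the second Borel--Cantelli lemma it suffices to prove that $\sum_k p_{n_k} = \infty$; this will in fact yield the stronger conclusion that $S \cap \EE_X$ is almost surely infinite.

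To establish $\sum_k p_{n_k} = \infty$, I would compare this series with the full series $\sum_k p_k$, which diverges by hypothesis, using both the monotonicity of $(p_k)$ and the bounded-gap property of $S$. For each integer $j \geq n_1$, let $k(j)$ be the index of the largest element of $S$ not exceeding $j$, so that $n_{k(j)} \leq j < n_{k(j)+1}$. Since $(p_k)$ is non-increasing, $p_j \leq p_{n_{k(j)}}$. Since $S$ has bounded gap, the number of integers $j$ assigned to a fixed index $k$ is exactly $n_{k+1} - n_k \leq M$. Reorganizing the sum over $j$ according to the value of $k(j)$ then gives
\[
\sum_{j \ge n_1} p_j \;=\; \sum_k \sum_{j : k(j) = k} p_j \;\le\; \sum_k (n_{k+1} - n_k)\, p_{n_k} \;\le\; M \sum_k p_{n_k}.
\]
Because the left-hand side is infinite, so is $\sum_k p_{n_k}$, as desired.

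The computations are elementary; the only point requiring care is the bookkeeping in the comparison inequality, namely verifying that the two hypotheses enter in the correct directions---monotonicity to bound $p_j$ from above by $p_{n_{k(j)}}$, and the gap bound to control the multiplicity of each term $p_{n_k}$. One could equally avoid Borel--Cantelli and argue directly that $\PP[S \cap \EE_X = \emptyset] = \prod_k (1 - p_{n_k}) = 0$, which follows from $\sum_k p_{n_k} = \infty$ once one disposes of the degenerate case $p_{n_k} = 1$ (where $n_k \in \EE_X$ deterministically and the conclusion is immediate). I find the Borel--Cantelli route cleaner and prefer it, since it needs no such case distinction and delivers the infinitude of $S \cap \EE_X$ for free.
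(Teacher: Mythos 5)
Your proof is correct and takes essentially the same route as the paper: the crucial step in both is the comparison $\sum_j p_j \le M\sum_k p_{n_k}$ (the paper phrases it as $\sum_{i\in\N\setminus S}p_i\le C\sum_{i\in S}p_i$), obtained from monotonicity and the gap bound exactly as you do, yielding $\sum_{i\in S}p_i=\infty$. The only difference is cosmetic: the paper concludes by evaluating $\PP[S\cap\EE_X=\emptyset]=\prod_{i\in S}(1-p_i)=0$ directly, which is precisely the content of the second Borel--Cantelli lemma in this independent setting.
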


\begin{remark}
The condition that $p_k$ is non-increasing in Proposition \ref{P:boundedgap} in general can not be removed. For example, take $p_k=\frac{1}{k}$ when $k$ is odd and $p_k=0$ when $k$ is even, then $\EE_X \cap 2\N =\emptyset$.
\end{remark}

Finding arithmetic progressions of  fixed length (known as Erd\H{o}s and Tur\'{a}n's problem) is one of the most attractive question in number theory. In \cite{Yoshiharu-1996}, Yoshiharu Kohayakawa, Tomasz {\L}uczak, and Vojt\v{e}ch R\"{o}dl proved a random-set analogue of Roth's theorem on 3-term arithmetic progressions. The random counterpart of Erd\H{o}s and Tur\'{a}n's problems attracts many attentions(cf. \cite{Mariah-2008,Gowers-2016, Mathias-2016,Liu-2019}). However, it seems that our random set $\EE_X$ is different from these mentioned ones, and it is natural to consider arithmetic progressions properties of $\EE_X$. We present two simple conditions such that almost surely $\EE_X$ contains or does not contain
infinite number of  arithmetic  progressions of a fixed length, respectively.

\begin{proposition}\label{P:AP01}
Let $X=(X_k)_{k\in\mathbb{N}}$ be a sequence of independent Bernoulli random variables with
$$\PP[X_k=1]=1-\PP[X_k=0]=p_k=\frac{1}{e^{c(\log n)^\epsilon}}$$
for some constant $c$ and $\epsilon\in(0,1)$. Then almost surely $\EE_X$ contains infinite number of  arithmetic  progressions of arbitrary length.
\end{proposition}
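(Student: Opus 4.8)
The plan is to fix a single length $\ell\in\N$ and prove that almost surely $\EE_X$ contains infinitely many arithmetic progressions of length $\ell$; since this is a countable family of almost sure events (one for each $\ell$), their intersection is again almost sure and yields the full statement. To decouple the problem I would work on the disjoint dyadic blocks $(2^n,2^{n+1}]$ and, inside each block, look only for the simplest progressions, namely runs of $\ell$ consecutive integers (common difference $1$), which are length-$\ell$ progressions. Concretely, partition the block into $M_n:=\lfloor 2^n/\ell\rfloor$ disjoint consecutive intervals $I_{n,1},\dots,I_{n,M_n}$ of length $\ell$, let $B_{n,j}$ be the event $I_{n,j}\subset\EE_X$, and let $A_n:=\bigcup_j B_{n,j}$ be the event that at least one of them lies in $\EE_X$. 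Because the blocks are disjoint and the $X_k$ are independent, the events $(A_n)_n$ are mutually independent, so by the second Borel--Cantelli lemma it suffices to show $\sum_n \PP[A_n]=\infty$; I in fact expect to prove $\PP[A_n]\to 1$.

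To bound $\PP[A_n]$ I would use that the $B_{n,j}$ are independent across $j$ (disjoint supports). Writing $p_k=e^{-c(\log k)^\epsilon}$ and letting $q_n:=e^{-c((n+1)\log 2)^\epsilon}$ be the smallest value of the decreasing sequence $p_k$ over the block, each interval satisfies $\PP[B_{n,j}]=\prod_{k\in I_{n,j}}p_k\ge q_n^{\ell}$. Using $1-x\le e^{-x}$ then gives $\PP[A_n^c]=\prod_j\bigl(1-\PP[B_{n,j}]\bigr)\le \exp\bigl(-M_n q_n^{\ell}\bigr)$, so everything reduces to the single lower bound $M_n q_n^{\ell}\to\infty$.

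This is exactly where the hypothesis $\epsilon<1$ enters. On the exponential scale $M_n q_n^{\ell}\asymp \exp\bigl(n\log 2 - c\ell((n+1)\log 2)^\epsilon\bigr)$, and since $\epsilon<1$ the stretched-exponential correction is $o(n)$, so the linear term $n\log 2$ dominates and $M_n q_n^{\ell}\to\infty$. Equivalently, $p_k=k^{-o(1)}$ decays slower than any fixed power of $k$, hence a product of $\ell$ such factors is still $k^{-o(1)}$ while the number $M_n$ of disjoint candidate runs grows linearly in $2^n$; there are therefore far more than enough runs. Consequently $\PP[A_n]\to 1$, the Borel--Cantelli sum diverges, and almost surely infinitely many blocks contain a length-$\ell$ run.

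I expect the only genuine difficulty to be bookkeeping rather than substance: making precise that $p_k$ is essentially constant across a dyadic block (controlling $q_n$ against $\max_k p_k$ on the block, which is harmless because $((n+1)\log 2)^\epsilon-(n\log 2)^\epsilon\to 0$), and verifying that $M_n q_n^{\ell}\to\infty$ holds for \emph{every} fixed $\ell$, which is precisely the arithmetic content of $\epsilon<1$ and would fail at $\epsilon=1$ for large $\ell$. It is worth recording that the same blocking scheme applied to \emph{all} $\asymp 2^{2n}$ progressions in a block, via a second-moment (Paley--Zygmund) estimate, also yields $\PP[A_n]\to 1$ and the same conclusion; the consecutive-run argument above is simply the shortest route, as it requires no variance computation.
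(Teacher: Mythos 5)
Your proposal is correct and follows essentially the same route as the paper: both arguments look only for runs of $\ell$ consecutive integers, exploit the independence of disjoint blocks, and apply the second Borel--Cantelli lemma using the fact that $e^{-c\ell(\log N)^{\epsilon}}$ decays sub-polynomially when $\epsilon<1$. The only difference is bookkeeping --- the paper sums $\PP[E(k)]$ directly over all disjoint length-$\ell$ runs and gets a harmonic-type divergence, whereas you aggregate the runs into dyadic blocks and show $\PP[A_n]\to 1$, a marginally stronger intermediate statement obtained from the same estimate.
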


\begin{proposition}\label{P:AP02}
Let $X=(X_k)_{k\in\mathbb{N}}$ be a sequence of independent Bernoulli random variables with
$$\PP[X_k=1]=1-\PP[X_k=0]=p_k.$$
Suppose that $p_k=O(k^{-\alpha})$ for $0 < \alpha \le 1/2$ and let $l \ge 2$ be the smallest integer with $l >  \alpha^{-1} \ge 2$.   Then almost surely $\EE_X$ does not contain infinite number of  arithmetic  progressions of length $l + 1$.
\end{proposition}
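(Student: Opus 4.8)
The plan is to bound the expected number of length-$(l+1)$ arithmetic progressions sitting inside $\EE_X$ and then invoke the first Borel--Cantelli lemma. For integers $a\ge 1$ and $d\ge 1$ let $A_{a,d}$ denote the event that $\{a,a+d,\dots,a+ld\}\subset\EE_X$; distinct progressions of length $l+1$ correspond bijectively to pairs $(a,d)$, so $\EE_X$ contains infinitely many such progressions exactly when infinitely many of the events $A_{a,d}$ occur. By independence of the $X_k$,
\[
\PP[A_{a,d}]=\prod_{j=0}^{l}p_{a+jd}\le C^{l+1}\prod_{j=0}^{l}(a+jd)^{-\alpha},
\]
where $C$ is the implied constant in $p_k=O(k^{-\alpha})$. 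The goal is therefore to arrange the double sum $\sum_{a,d}\PP[A_{a,d}]$ so that it converges, after which the first Borel--Cantelli lemma gives that almost surely only finitely many $A_{a,d}$ occur.

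The key elementary estimate is $a+jd\ge jd$ for every $j\ge 1$, which yields
\[
\prod_{j=1}^{l}(a+jd)^{-\alpha}\le (l!)^{-\alpha}\,d^{-l\alpha}.
\]
Since $l$ is the smallest integer with $l>\alpha^{-1}$, we have $l\alpha>1$, and hence $\sum_{d\ge 1}d^{-l\alpha}<\infty$. Keeping the factor $p_a\le Ca^{-\alpha}$ from the first term and summing over $d$ first gives, for each fixed starting point $a$,
\[
\sum_{d\ge 1}\PP[A_{a,d}]\le C^{l+1}(l!)^{-\alpha}\,a^{-\alpha}\sum_{d\ge 1}d^{-l\alpha}<\infty .
\]
Thus for each fixed $a$ the first Borel--Cantelli lemma already shows that almost surely only finitely many progressions of length $l+1$ begin at $a$. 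This is exactly the step where the hypothesis $l>\alpha^{-1}$ enters.

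The main obstacle is to upgrade this per-starting-point statement to one that holds uniformly over all $a$ at once. Summing the bound above over $a$ produces $\sum_{a}a^{-\alpha}\sum_{d}d^{-l\alpha}$, and the factor $\sum_a a^{-\alpha}$ diverges because $\alpha\le 1/2<1$; hence a term-by-term union bound over starting points is far too wasteful, and the genuine difficulty is concentrated in the progressions having a small first term together with a large common difference. To control these I would decompose the progressions according to the dyadic scale $2^n<a+ld\le 2^{n+1}$ of their largest element and estimate, block by block, the number of length-$(l+1)$ progressions meeting that block, exploiting that $\{X_k:2^n<k\le 2^{n+1}\}$ is independent of the variables lying in the other blocks. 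Within each scale one would replace the crude first-moment bound by a sharper count, for instance a second-moment (Paley--Zygmund type) estimate for the number of progressions confined to a single dyadic block, and then sum the resulting scale-by-scale contributions. Making such a decomposition yield a series that is summable in $n$ is the technical heart of the argument, and it is precisely here that the exact interplay between $l$, $\alpha^{-1}$ and the two free parameters $(a,d)$ — rather than merely the inequality $l\alpha>1$ — must be brought to bear.
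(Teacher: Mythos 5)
Your setup and first-moment bound are correct as far as they go, but the proposal does not prove the proposition: it stops precisely where the work begins. Knowing that for each \emph{fixed} $a$ only finitely many progressions start at $a$ almost surely is much weaker than the claim (the countable intersection over $a$ still permits infinitely many progressions in total), and the dyadic decomposition you sketch in the final paragraph is never carried out. Worse, the tool you propose for it, a Paley--Zygmund / second-moment estimate, is a device for showing that a counting variable is \emph{large} with non-negligible probability; it cannot be used to show that a total count is finite. For a finiteness statement of this kind the elementary route is a summable first moment followed by the first Borel--Cantelli lemma, and your own computation shows that with the bound $\prod_{j\ge 1}(a+jd)^{-\alpha}\le (l!)^{-\alpha}d^{-l\alpha}$ the double series over $(a,d)$ diverges. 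So as written there is a genuine gap: nothing controls the progressions in which $a$ and $d$ are of comparable size.

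For comparison, the paper's proof follows the same first-moment strategy but keeps the $a$-dependence, bounding $\prod_{k=0}^{l}(i+kd)^{-\alpha}\le i^{-\alpha}(i+d)^{-l\alpha}$ and summing over $d$ first, which gives $\sum_{d}\PP[E_l(i,d)]\lesssim i^{-\alpha}\cdot i^{1-l\alpha}/(l\alpha-1)$. You should be aware, however, that the paper then records the resulting exponent as $l\alpha$ rather than the correct value $(l+1)\alpha-1$; since $l\le \alpha^{-1}+1$ and $\alpha\le 1/2$ one has $(l+1)\alpha-1\le 2\alpha\le 1$, so the series $\sum_i i^{-((l+1)\alpha-1)}$ in fact diverges. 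In other words, the obstruction you ran into --- progressions with $i$ and $d$ of the same order contribute a divergent amount to the expected count --- is real, and the paper's computation does not actually circumvent it; it only appears to because of this algebraic slip. A genuine completion would need either a stronger hypothesis guaranteeing $(l+1)\alpha>2$ (so that the union bound over all pairs $(i,d)$ converges) or an argument that goes beyond the first moment; your instinct that the difficulty is concentrated at comparable scales is sound, but the second-moment machinery you invoke points in the opposite direction from the conclusion you need.
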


\section{Lacunary property}

This section is devoted to the proof of Theorem \ref{P:lacunary}. 

\begin{lemma} \label{P:Cmany}
Let $\alpha>0$ and let $f:[1,\infty)\to [1, \infty)$ be an admissible function.  Suppose that $X=(X_k)_{k\in\mathbb{N}}$ are independent Bernoulli random variables such that
$$
\PP[X_k=1]=1-\PP[X_k=0]=\frac{1}{kf(k)^\alpha}.
$$
For any $a>1$, let $\mathcal{N}_n=|\EE_X\cap (a^n,a^{n+1}]|$,  then
\[
\limsup_{n\to\infty}\NN_n= \lfloor \alpha^{-1}\rfloor, \quad \text{ a.s},
\]
where $\lfloor \alpha^{-1}\rfloor$ is the largest integer not greater than $\alpha^{-1}$.
 \end{lemma}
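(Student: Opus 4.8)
The plan is to realize $\mathcal{N}_n$ as a sum of independent Bernoulli variables over the pairwise disjoint integer blocks $I_n:=(a^n,a^{n+1}]\cap\N$ and to pin down the almost sure value of $\limsup_n\mathcal{N}_n$ by applying the Borel--Cantelli lemmas in both directions. Write $\lambda_n:=\E[\mathcal{N}_n]=\sum_{k\in I_n}\frac{1}{kf(k)^\alpha}$ and put $L:=\lfloor\alpha^{-1}\rfloor$. Since the blocks $I_n$ are disjoint, the variables $(\mathcal{N}_n)_n$ are independent, so the converse Borel--Cantelli lemma is available for the lower bound. The whole argument reduces to deciding, for each integer $m\ge1$, whether $\sum_n\lambda_n^m$ converges, together with matching two-sided tail estimates for $\PP[\mathcal{N}_n\ge m]$.

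\emph{Integral comparison.} Because $f$ is non-decreasing and $\sum_{k\in I_n}1/k=\log a+o(1)$, one has $\lambda_n\le(\log a+1)f(a^n)^{-\alpha}$ and $\lambda_n\ge(\log a+o(1))f(a^{n+1})^{-\alpha}$. Comparing the step function $n\mapsto f(a^n)^{-\alpha m}$ with the integral via $\int_{a^n}^{a^{n+1}}\frac{dx}{xf(x)^{\alpha m}}\le\log a\,f(a^n)^{-\alpha m}$ and its reverse, the admissibility of $f$ yields the dichotomy: $\sum_n\lambda_n^m<\infty$ if $\alpha m>1$ and $\sum_n\lambda_n^m=\infty$ if $\alpha m\le1$ (in the borderline case $\alpha m=1$ one uses $\int\frac{dx}{xf}=\infty$ together with $f\ge1$). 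Note also that $f$ must be unbounded---otherwise $\int\frac{dx}{xf^{1+\epsilon}}$ would diverge---so $\lambda_n\to0$.

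\emph{Upper bound $\limsup_n\mathcal{N}_n\le L$.} Take $m=L+1$, the least integer exceeding $\alpha^{-1}$, so that $\alpha(L+1)>1$. The elementary symmetric estimate $\PP[\mathcal{N}_n\ge L+1]\le e_{L+1}\big((p_k)_{k\in I_n}\big)\le\frac{\lambda_n^{L+1}}{(L+1)!}$ together with $\sum_n\lambda_n^{L+1}<\infty$ and the first Borel--Cantelli lemma give $\mathcal{N}_n\le L$ for all large $n$, a.s. For the \emph{lower bound} $\limsup_n\mathcal{N}_n\ge L$ I would estimate $\PP[\mathcal{N}_n\ge L]\ge\PP[\mathcal{N}_n=L]\ge\big(\prod_{k\in I_n}(1-p_k)\big)\,e_L\big((p_k)_{k\in I_n}\big)$, where the product is $\ge1/2$ eventually since $\lambda_n\to0$. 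Writing $s_n:=\sum_{k\in I_n}p_k^2$ and expanding $(\sum_kp_k)^L$, the tuples with a repeated index contribute the correction $L!\,e_L(p)=\lambda_n^L\big(1-O(s_n/\lambda_n^2)\big)$, so that $\PP[\mathcal{N}_n\ge L]\gtrsim\lambda_n^L$ as soon as $s_n=o(\lambda_n^2)$.

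The main obstacle is exactly this spreading condition $s_n=o(\lambda_n^2)$, i.e.\ that no single site dominates the mass $\lambda_n$: if $f$ jumps sharply just after $a^n$, then one term can carry almost all of $\lambda_n$ and $\PP[\mathcal{N}_n\ge L]$ collapses below $\lambda_n^L$. I would resolve this by splitting the blocks into \emph{good} ones, with $\lambda_n\ge c\,f(a^n)^{-\alpha}$, and \emph{bad} ones. On good blocks $s_n\le(\max_{k\in I_n}p_k)\lambda_n\le a^{-n}f(a^n)^{-\alpha}\lambda_n$ forces $s_n/\lambda_n^2\lesssim a^{-n}\to0$, so $\PP[\mathcal{N}_n\ge L]\gtrsim\lambda_n^L$ there. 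A bad block, by the lower bound $\lambda_n\ge(\log a)f(a^{n+1})^{-\alpha}$, forces $f(a^{n+1})\ge\rho\,f(a^n)$ for a fixed $\rho>1$; since $f$ is non-decreasing this makes the bad indices geometrically sparse and gives $\sum_{\mathrm{bad}}\lambda_n^L<\infty$. As $\sum_n\lambda_n^L=\infty$ (because $\alpha L\le1$), the divergence must be carried by the good blocks, whence $\sum_{\mathrm{good}}\PP[\mathcal{N}_n\ge L]=\infty$; the independence of the $\mathcal{N}_n$ and the converse Borel--Cantelli lemma then yield $\mathcal{N}_n\ge L$ infinitely often, a.s. Combining the two bounds gives $\limsup_n\mathcal{N}_n=L$ almost surely.
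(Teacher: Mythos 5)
Your proposal is correct, and its skeleton coincides with the paper's: both exploit the independence of the block counts $\mathcal{N}_n$ over the disjoint intervals $(a^n,a^{n+1}]$, prove $\sum_n\PP[\mathcal{N}_n\ge C+1]<\infty$ and $\sum_n\PP[\mathcal{N}_n=C]=\infty$ (with $C=\lfloor\alpha^{-1}\rfloor$) via the two admissibility integrals compared against the geometric partition, and conclude with the two Borel--Cantelli lemmas. The upper bounds are essentially identical (your $e_{C+1}(p)\le\lambda_n^{C+1}/(C+1)!$ is the paper's binomial sum in disguise). Where you genuinely diverge is the lower bound. The paper replaces every $p_k$ on the block by its uniform minimum $\frac{1}{a^{n+1}f(a^{n+1})^\alpha}$, so that $\PP[\mathcal{N}_n=C]\ge\binom{L_n}{C}(\min_k p_k)^C\prod_k(1-p_k)\gtrsim f(a^{n+1})^{-\alpha C}\ge f(a^{n+1})^{-1}$ follows from a Gamma-function asymptotic for $\binom{L_n}{C}a^{-(n+1)C}$; since $L_n\cdot\min_{k\in I_n}p_k\asymp f(a^{n+1})^{-\alpha}$ automatically, no concentration issue ever arises and no good/bad dichotomy is needed. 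You instead work with the block mean $\lambda_n$ and elementary symmetric polynomials, which forces you to rule out mass concentration via the second-moment correction $s_n/\lambda_n^2$ and the good/bad block decomposition; your resolution of that obstacle (bad blocks force $f(a^{n+1})\ge\rho f(a^n)$, hence geometric decay of $\lambda_n$ along bad indices and convergence of $\sum_{\mathrm{bad}}\lambda_n^L$) is sound, but it is extra machinery that the paper's uniform-minimum trick renders unnecessary. On the other hand, your route is arguably more robust: it isolates exactly the hypothesis ($s_n=o(\lambda_n^2)$ on a divergent subfamily) under which the conclusion would persist for more general weight sequences, whereas the paper's computation is tied to the specific form $p_k=1/(kf(k)^\alpha)$.
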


 \begin{proof}
 For any $n\in\N$, set
\[
I_n:=\N \cap (a^n, a^{n+1}]
\text{ and }
L_n : = | I_n|.
\]
Observe that
\[
a^{n+1}-a^n-1\leq L_n\leq a^{n+1} - a^n+1,
\]
we have
\begin{align*}\label{L-n-a-n}
\lim\limits_{n\rightarrow\infty}\frac{L_n}{a^n}=a-1.
\end{align*}
For any $k\in I_n$, since $f$ is non-decreasing,  we have
\[
\PP[X_k=1]=\frac{1}{kf(k)^\alpha}\geq\frac{1}{a^{n+1}f(a^{n+1})^\alpha}
\]
and
\begin{align*}
& \PP[X_k=0]  \geq \PP\left[X_{\lfloor a^n\rfloor + 1}=0\right]
                            \\&=1-\frac{1}{\left(1+\lfloor a^n\rfloor\right) f (\lfloor a^n\rfloor + 1)^\alpha}
                            \geq 1-\frac{1}{a^n f(a^n )^\alpha}.
\end{align*}
Let $C=\lfloor \frac{1}{\alpha}\rfloor$, then $\alpha C\leq 1$. Moreover, we have
\begin{align*}
\PP[\NN_n=C]\geq &\sum_{ \substack{A\subset I_n \\ |A|=C}} \left[\frac{1}{a^{n+1} f( a^{n+1})^\alpha}\right]^C\left[1-\frac{1}{a^n f( a^n)^\alpha}\right]^{L_n-C}\\
=&\binom {L_n}{ C}\left[\frac{1}{a^{n+1} f( a^{n+1})^\alpha}\right]^C\left[1-\frac{1}{a^n f( a^n)^\alpha}\right]^{L_n-C}.
\end{align*}
Notice that
\[
\lim_{n\to\infty}\left[1-\frac{1}{a^n f(a^n)^\alpha}\right]^{L_n-C}=\lim_{n\to\infty}\left\{\left[1-\frac{1}{a^n f( a^n)^\alpha}\right]^{a^n f( a^n)^\alpha}\right\}^{\frac{L_n-C}{a^n f( a^n)^\alpha}}= 1.
\]
Therefore, there exist constants $\beta>0$ and $M_0>0$ such that for $n\geq M_0$,
\[
\PP[\NN_n=C]\geq \beta\binom {L_n}{ C}\left[\frac{1}{a^{n+1}f( a^{n+1})^\alpha}\right]^C.
\]
 Recall
\[
a^n(a-1)-1\leq L_n\leq a^n(a-1)+1,
\]
we have
\begin{align*}
\binom {L_n}{ C}\frac{1}{a^{(n+1)C}}=\frac{\Gamma(L_n+1)}{\Gamma(C+1)\Gamma(L_n-C+1)}\frac{1}{a^{(n+1)C}}
\geq \\
\geq \frac{1}{\Gamma(C+1)}\frac{\Gamma(a^n(a-1))}{\Gamma(a^n(a-1)+2-C)}\frac{1}{a^{(n+1)C}}.
\end{align*}
Notice that for any real number $x$,
\[
\lim\limits_{n\to\infty}\frac{\Gamma(n+x)}{\Gamma(n)n^x}=1.
\]
Then we can find constants $M_1>0$  and $\beta'>0$ such that
\[
\binom {L_n}{ C}\frac{1}{a^{(n+1)C}}>\beta' \qquad \text{ for all }  n \ge M_1.
\]
Since $\alpha C \le 1$, we obtain for $n\ge M:=\max\{M_0,M_1\}$,
\[
\PP[\NN_n=C]\geq \frac{\beta \beta'}{ f( a^{n+1})^{\alpha C}}\geq \frac{ \beta \beta'}{f(a^{n+1})}.
\]
Since $f$ is an admissible function, we have
\begin{align*}
\sum_{n=M}^{\infty}\frac{1}{f(a^{n+1})}
 &= \frac{1}{ \log a}\sum_{n=M}^{\infty} \frac{1}{f(a^{n+1})} \int_{a^{n+1}}^{a^{n+2}}\frac{dx}{x}\geq \\
 &\geq\frac{1}{\log a}\sum_{n=M}^{\infty}  \int_{a^{n+1}}^{a^{n+2}}\frac{dx}{xf(x)}
\geq\frac{1}{\log a}\int_{a^{M+1}}^{\infty}
\frac{dx}{x f(x)} = \infty.
\end{align*}
It follows that
\[
\sum_{n=1}^{\infty}\PP[\NN_n=C]=\infty.
\]
Observe that the random variables $\NN_n$ are independent, we have
\begin{align}\label{ge-C-as}
\limsup_{n\to\infty} \NN_n\geq C, \quad a.s.
\end{align}
On the other hand,
\[
\PP[\NN_n\geq C+1] = \sum_{k = C+1}^{L_n} \PP[\NN_n = k]  \leq \sum_{k = C+1}^{L_n} \binom{L_n}{k}   \left[ \frac{1}{a^{n}f(a^n)^{\alpha}} \right]^k.
\]
Then by using $C+1 > \frac{1}{\alpha}$ and $f(a^n) \ge 1$, we obtain
\begin{align*}
\sum_{n\in\N}\PP[\NN_n\geq C+1]& \leq \sum_{n\in\N}\sum_{k= C+1}^{L_n}\binom{L_n}{k}\frac{1}{a^{nk} f(a^n)^{\alpha k}}  \leq \sum_{n\in\N} \frac{1}{f(a^n)^{\alpha (C+1)}}\sum_{k= C+1}^{L_n}\binom{L_n}{k}\frac{1}{a^{nk}}
\\
&  \leq \sum_{n\in\N} \frac{1}{f(a^n)^{\alpha (C+1)}} \left[1+\frac{1}{a^n}\right]^{L_n}
\leq  \sup_{n\in \N} \left[1+\frac{1}{a^n}\right]^{a^{n+1}} \bigg( \sum_{n\in\N}\frac{1}{f(a^n)^{\alpha(C+1)}}\bigg).
\end{align*}
Since $\alpha(C+1)>1$ and $f$ is admissible, we have
\begin{align*}
 \log a \bigg(\sum_{n=2}^{\infty}\frac{1}{f(a^n)^{\alpha(C+1)}} \bigg)= \sum_{n=2}^{\infty}\frac{1}{f(a^n)^{\alpha(C+1)}} \int_{a^{n-1}}^{a^n} \frac{dx}{x}\leq \\
\leq\sum_{n=2}^{\infty}\int_{a^{n-1}}^{a^n}
\frac{dx}{xf(x)^{\alpha (C+1)}}=\int_a^{\infty}\frac{dx}{xf(x)^{\alpha (C+1)} }<\infty.
\end{align*}
It follows that
\[
\sum_{n\in\N}\PP[\NN_n\geq C+1]<\infty
\]
and hence
\begin{align}\label{le-C-as}
\limsup_{n\to\infty} \NN_n<C+1, \quad a.s.
\end{align}
Combining \eqref{ge-C-as} and \eqref{le-C-as} and using the fact  that $\limsup\limits_{n\to\infty}  \NN_n \in \N \cup \{\infty\}$, we obtain the desired limit equality
\[
\limsup_{n\to\infty} \NN_n =  C = \lfloor \alpha^{-1} \rfloor, \quad a.s.
\]
This completes the proof of Lemma \ref{P:Cmany}.
 \end{proof}

Before proving Theorem \ref{P:lacunary}, we present a corollary of Lemma \ref{P:Cmany}.
\begin{corollary}
Let $f$ be an admissible function.
Let $g:[1,\infty)\to [1,\infty)$  be a function such that $g(k)\leq f(k)^\alpha$ for any $0<\alpha\leq 1$. Consider a sequence of independent Bernoulli random variables $X=(X_k)_{k\in\mathbb{N}}$ with
$$
\PP[X_k=1]=1-\PP[X_k=0]=\frac{1}{kg(k)}.
$$
Then almost surely $\EE_X$ is not a finite union of lacunary sequence.
\end{corollary}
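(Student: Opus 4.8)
The plan is to reduce the statement to the block-count asymptotics already established in Lemma~\ref{P:Cmany}, via a monotone coupling, and then to exploit the elementary fact that a lacunary sequence can meet a geometric block in only boundedly many points. Throughout I fix once and for all a base $a>1$ (say $a=2$) and work with the blocks $I_n=\N\cap(a^n,a^{n+1}]$ and the counts $\NN_n=|\EE_X\cap I_n|$. The goal is first to prove that
\[
\limsup_{n\to\infty}\NN_n=\infty \quad \text{a.s.},
\]
and then to show this is incompatible with $\EE_X$ being a finite union of lacunary sequences.

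First I would set up the coupling. Fix $\alpha\in(0,1]$. By the hypothesis $g(k)\le f(k)^{\alpha}$ (which I may take to hold for all $k\ge k_0(\alpha)$) we have $\tfrac{1}{kg(k)}\ge\tfrac{1}{kf(k)^{\alpha}}$ for $k\ge k_0(\alpha)$. Let $(U_k)_{k\in\N}$ be i.i.d.\ uniform on $[0,1]$ and put $X_k=\mathbbm{1}[U_k\le \tfrac{1}{kg(k)}]$ and $Y_k^{(\alpha)}=\mathbbm{1}[U_k\le \tfrac{1}{kf(k)^{\alpha}}]$. Then $X_k\ge Y_k^{(\alpha)}$ for all $k\ge k_0(\alpha)$, so $\EE_X$ eventually contains $\EE_{Y^{(\alpha)}}$ and the corresponding block counts satisfy $\NN_n\ge \NN_n^{(\alpha)}$ for all large $n$ (the finitely many indices $k<k_0(\alpha)$ affect only finitely many blocks and hence do not change the $\limsup$). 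Since $X_k=\mathbbm{1}[U_k\le \tfrac{1}{kg(k)}]$ is an honest realization of the random variables of the corollary, it suffices to prove the claim for this realization. Applying Lemma~\ref{P:Cmany} to $Y^{(\alpha)}$ gives $\limsup_n\NN_n^{(\alpha)}=\lfloor\alpha^{-1}\rfloor$ a.s., whence $\limsup_n\NN_n\ge\lfloor\alpha^{-1}\rfloor$ a.s. Taking a sequence $\alpha_m\downarrow 0$ and intersecting the countably many almost-sure events yields $\limsup_n\NN_n\ge\lfloor\alpha_m^{-1}\rfloor$ for every $m$, and since $\lfloor\alpha_m^{-1}\rfloor\to\infty$ we conclude $\limsup_n\NN_n=\infty$ almost surely.

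It remains to carry out the deterministic step. I claim that if $S=\{n_k\}$ is lacunary with $\liminf_k n_{k+1}/n_k=r>1$, then there is an integer $m(S)$ such that $S$ has at most $m(S)$ elements in each block $(a^n,a^{n+1}]$ for all large $n$. Indeed, choose $1<r'<r$; for all large indices consecutive ratios exceed $r'$, so if $S$ contains $m$ points in a single block, the ratio of its largest to smallest such point is at least $(r')^{m-1}$, while that ratio is at most $a$ since both endpoints lie in $(a^n,a^{n+1}]$. Hence $(r')^{m-1}\le a$, forcing $m\le 1+\log a/\log r'=:m(S)$. Consequently a finite union $S_1\cup\cdots\cup S_L$ of lacunary sequences meets each block $(a^n,a^{n+1}]$ in at most $\sum_{j=1}^{L}m(S_j)$ points for all large $n$; that is, the block counts of any such finite union are bounded. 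This contradicts $\limsup_n\NN_n=\infty$, so almost surely $\EE_X$ is not a finite union of lacunary sequences.

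The conceptual heart—and the place needing the most care—is this last translation between block counts and lacunarity: one must ensure the per-block bound $m(S)$ is uniform in $n$ (achieved by passing from $r$ to a strictly smaller $r'$ and discarding the finitely many small indices) and that additivity over a finite union preserves boundedness. The probabilistic ingredients are comparatively routine: the coupling is monotone by construction, and the upgrade from ``$\limsup\ge\lfloor\alpha^{-1}\rfloor$ for each $\alpha$'' to ``$\limsup=\infty$'' is just a countable intersection of almost-sure events.
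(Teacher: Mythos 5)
Your proposal is correct and follows exactly the route the paper intends: its entire proof of this corollary is the one-line remark that it ``follows from Lemma \ref{P:Cmany} and the standard monotone coupling arguments,'' and you have simply written out that coupling, the passage $\alpha\downarrow 0$ to get $\limsup_n\NN_n=\infty$, and the (correct) deterministic observation that a finite union of lacunary sequences meets each geometric block in a bounded number of points. No gaps.
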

\begin{proof}
The proof follows from Lemma \ref{P:Cmany} and the standard monotone coupling arguments.
\end{proof}
Now, we are ready to prove Theorem \ref{P:lacunary}.

\begin{proof}[Proof of Theorem \ref{P:lacunary}]
First, we prove  equality \eqref{sup-prod}. It suffices to show
\begin{align}\label{prod-small-prob}
\sum_{n=1}^{\infty}\mathbb{P}(\NN_n\cdot \NN_{n+1}\geq 1)<\infty.
\end{align}
Since for each $n\in\mathbb{N}$, $\NN_n$ and $\NN_{n+1}$ are independent, we have
$$
\mathbb{P}(\NN_n\cdot \NN_{n+1}\geq 1)=\mathbb{P}(\NN_n\geq 1)\mathbb{P}(\NN_{n+1}\geq 1)=[1-\mathbb{P}(\NN_n=0)][1-\mathbb{P}(\NN_{n+1}=0)].
$$
Let $I_n= \N \cap (2^n, 2^{n+1}]$, then we have
$$
\mathbb{P}(\NN_n=0)=\prod_{k\in I_n}\Big(1-\frac{1}{kf(k)}\Big).
$$
Since $f(x)\geq 1$, there exists $\beta>0$ such that
$$
1-\frac{1}{kf(k)}\geq \exp\Big(-\frac{\beta}{kf(k)}\Big) \quad \text{for all integers $k\geq 1$.}
$$
 It follows that for $n\geq 1$
$$
\mathbb{P}(\NN_n=0)\geq \exp\Big(-\beta\sum_{k=2^n+1}^{2^{n+1}}\frac{1}{k f(k)}\Big)\geq \exp\Big(-\beta
\underbrace{\int_{2^n}^{2^{n+1}}\frac{1}{x f(x)}dx}_{ \text{denoted by $A_n$}}\Big).
$$
Observe that $1-e^{-x}\leq x$ for $x\geq 0$, then
$1-\exp(-\beta A_n)\leq \beta A_n$,
and thus
\begin{eqnarray*}
\mathbb{P}(\NN_n \NN_{n+1}\geq 1)\leq \beta^2 A_nA_{n+1}.
\end{eqnarray*}
Since
$$
A_n=\int_{2^n}^{2^{n+1}}\frac{1}{x f(x)}dx\leq\frac{2^{n+1}-2^n}{2^{n}f(2^n)} = \frac{1}{f(2^{n})},
$$
we have
$$
A_nA_{n+1}\leq \frac{1}{f(2^{n})f(2^{n+1})}\leq\frac{1}{f(2^{n})^2}.
$$
It follows that there exists $M_0\in \N$ such that
$$\sum_{n=M_0}^{\infty}A_nA_{n+1}\leq \sum_{n=M_0}^{\infty}\frac{1}{f(2^{n})^2}<\infty.
$$
The last inequality is due to the following inequality
\begin{align*}
\sum_{n=1}^{\infty}\frac{1}{f(2^{n})^2}  = \sum_{n  = 1}^\infty   \frac{2^{n} - 2^{n-1} }{ 2^{n-1} f(2^{n})^2} \le \sum_{n  = 1}^\infty \int_{2^{n-1}}^{2^{n}}  \frac{1}{x f( x)^2} dx = \int_1^{\infty}\frac{1}{xf(x)^2}dx<\infty.
\end{align*}
Consequently, we obtain the desired inequality \eqref{prod-small-prob} and complete the proof of \eqref{sup-prod}.

Next, we prove the assertion on lacunary properties. We write $\EE_X$ as an increasing sequence $\EE_X = \{n_k(X)\}$.  By Lemma \ref{P:Cmany},
\[
\limsup\limits_{n\to\infty} \NN_n=1 \text{ a.s. }
\]
Combining this with  \eqref{sup-prod},  we derive that almost surely,
\[
\limsup_{k\to\infty} \frac{n_{k+1}(X) }{n_k(X)} \geq 2.
\]
Therefore,  almost surely,  we have
\[
\inf_{k\in \N} \frac{n_{k+1}(X)}{n_k(X)} >1.
\]
That is, $\EE_X$ is a lacunary sequence. This completes the proof of Theorem \ref{P:lacunary}.
\end{proof}

\section{The gap}
The goal of this section is to prove Propositions \ref{P:gap} and \ref{P:boundedgap}.

\begin{proof}[Proof of Proposition \ref{P:gap}]
The assumption $\sum_{k}p_k=\infty$  implies that the sequence  $\EE_X$  is almost surely an infinite subset. 
Fix $C\in\N$, consider the event
\[
E:=\left\{\liminf_{k\rightarrow\infty}\left( n_{k+1}(X)-n_k (X)\right) \leq C\right\}.
\]
Notice that
\[
E=  \mathop{\bigcup}_{l=1}^C \left\{\liminf_{k\rightarrow\infty}\left( n_{k+1}(X)-n_k (X)\right)  = l \right\} = \mathop{\bigcup}_{l=1}^C\left\{ n_{k+1}(X)-n_k(X)=l\quad\text{i.o.}\right\}
\]
and, for any fixed integer $1\le l\le C$, 
\begin{align*}
\PP\left[ n_{k+1}(X)-n_k(X)=l\quad\text{i.o.}\right]&\leq\PP\left[X_{k}X_{k+l}=1\,\,\text{i.o.}\right].
\end{align*}
Since 
\begin{align*}
\sum_{k=1}^\infty\PP\left[X_{k}X_{k+l}=1\right] = \sum_{k=1}^\infty\E[X_{k}X_{k+l}]   =  \sum_{k=1}^\infty p_kp_{k+l} \le \big(\sum_{k = 1}^\infty p_k^2\big)^{1/2} \big(\sum_{k = 1}^\infty p_{k+l}^2\big)^{1/2}
<\infty,
\end{align*}
we have
\[
\PP\left[X_{k}X_{k+l}=1\,\,\text{i.o.}\right]=0.
\]
Hence $\PP[E]=0$ and we get almost surely
\[
\liminf_{k\rightarrow\infty}(n_{k+1}(X)-n_k(X))> C.
\]
Since $C$ is an arbitrary number,
\[
\lim_{k\rightarrow\infty}(n_{k+1}(X)-n_k(X))=\infty \quad \text{a.s.}
\]
This completes the proof of Proposition \ref{P:gap}.
\end{proof}

\begin{proof}[Proof of Proposition \ref{P:boundedgap}]
Let $S = \{s_1<s_2<\cdots\}\subset\N$ be a given  sequence with bounded gap such that $\mathrm{Gap}(S)=C<\infty$.
Since $p_k$ is decreasing, we have
\begin{align*}
\sum_{i\in \N\setminus S}p_i=&\sum_{k=1}^\infty \sum_{s_{k}<i < s_{k+1} }p_i\leq C\sum_{k=1}^\infty p_{s_{k}}=C\sum_{i\in S}p_i.
\end{align*}
the above inequality, combined with $\sum_{i\in S}p_i+\sum_{i\in \N\setminus S}p_i = \sum_{i\in \N}p_i = \infty$, implies that 
\[
\sum_{i\in S}p_i=\infty.
\]
Therefore, 
\[
\sum_{i\in S}\log\left( \frac{1}{1-p_i}\right)=\infty.
\]
By the independence of $X$,
\begin{align*}
\PP[S\cap\EE_X=\emptyset]=\prod_{i\in S}(1-p_i)=\exp\Big\{-\sum_{i\in S}\log\Big( \frac{1}{1-p_i}\Big)\Big\}=0.
\end{align*}
Then we complete the proof of Proposition \ref{P:boundedgap}.
\end{proof}

\section{Arithmetic progression}
We prove Propositions \ref{P:AP01} and \ref{P:AP02} in this section.
\begin{proof}[Proof of Proposition \ref{P:AP01}]
Fix any  $l\geq 3$ and  for any $k\in\N$, define
\[
E(k):=\{i+kl\in \EE_X: i=1,2,\dots,l\}.
\]
Then the events $\{E(k):k=0,1,2,\dots\}$ are independent. Moreover,
\begin{align*}
&\sum_{k=0}^\infty \PP[E(k)]
=\sum_{k=0}^\infty\prod_{i=1}^l\frac{1}{e^{\left(\log(i+kl)\right)^\epsilon}}
=\sum_{k=0}^\infty\frac{1}{e^{\sum_{i=1}^l\left(\log(i+kl)\right)^\epsilon}}\geq \\
\geq&\sum_{k=0}^\infty\frac{1}{e^{l\left(\log(l+kl)\right)^\epsilon}}
\geq c\sum_{k=0}^\infty\frac{1}{e^{\log(k+1)}}
=\sum_{k=0}^\infty\frac{1}{k+1}=\infty.
\end{align*}
It follows from  Borel-Cantelli lemma \cite[Theorem 2.3.6]{Durrett} that
\[
\PP[E(k)\, \mathrm{i.o.}]=1.
\]
This means that $\EE_X$ contains infinitely many arithmetic progressions of length $l$ and completes the proof.
\end{proof}

\begin{proof}[Proof of Proposition \ref{P:AP02}]
Let $0 < \alpha \le 1/2$ and let $l \ge 2$ be the smallest integer with $l >  \alpha^{-1} \ge 2$.
For  any $i, d\in\N$,  consider the event
\[
E_l(i, d)=\{i+kd\in\EE_X: 0\leq k\leq l\}.
\]
Since $p_k=O(k^{-\alpha})$, there exists a constant $C>0$ such that
\[
\PP[E_l(i,d)]=\prod_{k=0}^lp_{i+kd}\leq C \prod_{k=0}^l \frac{1}{(i+kd)^\alpha}      = C  \frac{1}{i^\alpha} \frac{1}{(i+d)^{l\alpha}}.
\]
Then we have
\begin{align*}
\sum_{i, d=1}^\infty\PP[E_l(i, d)] &  \le   C  \sum_{i=1}^\infty \frac{1}{i^\alpha}  \sum_{d=1}^\infty \frac{1}{(i + d)^{l\alpha}}   \le C \sum_{i = 1}^\infty \frac{1}{i^\alpha} \int_{i}^\infty \frac{dx}{x^{l\alpha}}  = \frac{C}{l\alpha -1}  \sum_{i=1}^\infty  \frac{1}{i^{l \alpha }}<\infty.
\end{align*}
It follows from Borel-Cantelli lemma that, almost surely, there exist only finitely many arithmetic progressions of length $l+1$ contained in $\EE_X$. This completes the proof.
\end{proof}


\end{document}